\documentclass[10pt]{amsart}

\usepackage[utf8]{inputenc}
\usepackage{amsmath,amssymb,amsthm}
\usepackage{mathpazo}
\usepackage[a4paper,margin=1in]{geometry}
\usepackage{hyperref}
\usepackage{graphicx}
\newtheorem{theorem}{Theorem}[section]
\newtheorem{lemma}[theorem]{Lemma}
\theoremstyle{remark}
\newtheorem{remark}[theorem]{Remark}

\newcommand{\ssp}{\operatorname{ssp}}

\title[Strong path separation]
{Strongly separating graph edges with $10n$ paths}

\author{Xiao-Chuan Liu}
\address[Liu]{Departamento de Matemática,
 Universidade Federal de Pernambuco,
	Avenida Jornalista Aníbal Fernandes, Cidade Universitária, Recife, 50740-540, Brasil}
\email{xiaochuan.liu@ufpe.br}
\author{Boyan Xu}
\address[Xu]{School of Data Science and Information Engineering, Guizhou Minzu University, Guiyang, Guizhou Province, 550025, China}
\email{boyan04518@gmail.com}
\author{Xu Yang}
\address[Yang]{Instituto de Computação,  Universidade Federal de Alagoas,
	Av. Lourival Melo Mota, S/N, Maceió, 57072-900, Brasil}
\email{yang@ic.ufal.br}

\begin{document}

\begin{abstract}
A family of paths strongly separates the edges of a graph if every two distinct edges are separated in both directions by paths in the family. Bonamy, Botler, Dross, Naia, and Skokan proved that every $n$-vertex graph admits such a family of at most $19n$ paths. We improve this bound to $10n-o(n)$. 
\end{abstract}

\maketitle

\section{Introduction}

Separating systems are a classical topic in extremal set theory, going back to
R\'enyi~\cite{Renyi1961}.  A graph-constrained version arises when the ground
set is $E(G)$ and the members of the separating family are required to be
paths of $G$.  This restriction is natural in network design: if the edges of
$G$ represent communication links and at most one link is defective, then a
predetermined path may be used as a test, and a collection of such tests can
identify the defective link precisely when the corresponding paths separate
the edges. Thus, in contrast with unrestricted separating
systems, the tests here must respect the geometry of the underlying network.

A family $\mathcal P$ of simple paths in a graph $G$ \emph{strongly separates}
the edges of $G$ if, for every two distinct edges $e$ and $f$, there is a path
containing $e$ and avoiding $f$, and there is another path containing $f$ and
avoiding $e$.  The minimum size of such a family is the \emph{strong
separation number} $\ssp(G)$.  The requirement in both directions distinguishes
strong separation from the usual, or weak, separation condition, where one
only asks for a path containing exactly one of $e$ and $f$.

Falgas-Ravry, Kittipassorn, Kor\'andi, Letzter, and Narayanan~\cite{FKKLN2014} systematically
studied weakly separating path systems and conjectured that every $n$-vertex
graph admits one of size $O(n)$; they proved the conjecture for, among other
classes, random graphs and graphs of linear minimum degree, and obtained tight
bounds for trees.  Independently, Balogh, Csaba, Martin, and
Pluh\'ar~\cite{Balogh2016} formulated the corresponding conjecture for strong separation and
proved the general bound $O(n\log n)$.  Letzter~\cite{Letzter2024} subsequently
improved this to $O(n\log^* n)$.  The linear conjecture was
finally confirmed by Bonamy, Botler, Dross, Naia, and Skokan~\cite{BBDNS}, who proved $\ssp(G)\le 19n$
for every $n$-vertex graph $G$.

Since the existence of a linear bound was settled, a central issue has been to
determine the correct linear constant.  Recent results show that for several
important graph classes the answer is much closer to $n$ than the general
bound $19n$ suggests.  Fernandes, Mota, and Sanhueza-Matamala ~\cite{FernandesMotaSanhueza2025} proved $\ssp(K_n)=(1+o(1))n$
and, more generally, determined the asymptotic strong separation number for
certain robustly connected $\alpha n$-regular graphs.
For complete graphs, Kontogeorgiou and Stein~\cite{KontogeorgiouStein2026} subsequently obtained the explicit
upper bound $\ssp(K_n)\le n+9$.  In another
recent development, Fernandes, Hoppen, Kontogeorgiou, Mota, and Peng~\cite{FHKMP} proved
that every $2$-degenerate $n$-vertex graph satisfies $\ssp(G)\le n$; they also
derived bounds for subcubic, planar, and planar bipartite graphs, including
$\ssp(G)\le 2n$ for planar graphs and $\ssp(G)\le 3n/2$ for planar bipartite
graphs, and obtained sharp results in parts of the complete bipartite
regime.

These results make the general problem particularly compelling. On the one
hand, the examples and special classes above indicate that constants close to
\(1\) are often possible. On the other hand, a graph consisting of \(n/4\)
disjoint copies of \(K_4\) requires \(5n/4\) paths, so no universal bound of
the form \((1+o(1))n\) can hold for arbitrary, possibly disconnected graphs.
For connected graphs, however, such an asymptotic bound remains a possibility.
It is also worth noting that the earlier claimed lower bound close to \(2n\)
in~\cite{Balogh2016} relies on an incorrect value for the length of a longest
path in the relevant complete bipartite graph, as recently pointed out
in~\cite{FHKMP}. Thus a large gap remains between the known lower-bound
phenomena and the best universal upper bound, and improving the coefficient
in the general linear theorem is a natural step toward understanding the
extremal behavior of \(\ssp(G)\).

Our main result nearly halves the previously known universal coefficient.

\begin{theorem}\label{thm:main}
Let $G$ be a graph on $n$ vertices.  If $E(G)\ne\varnothing$, then
\begin{equation}\label{eq:main}
 \ssp(G)\leq 10n-3.
\end{equation}
\end{theorem}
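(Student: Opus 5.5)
The plan follows the strategy of Bonamy, Botler, Dross, Naia, and Skokan~\cite{BBDNS}. First decompose $G$ into few paths, then turn the decomposition into a strongly separating family by splitting and recombining, keeping tighter control of the constants. The proof goes by induction on $n$, with the additive $-3$ absorbing small cases: for $n=2$ a single edge needs one path, and $1\le 10\cdot 2-3$. Induction also disposes of disconnected graphs, since for components of orders $n_1,\dots,n_k$ with edges the bound $\sum (10n_i-3)\le 10n-3$ holds. So we may assume $G$ is connected.

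\textbf{Step 1: reduce to high minimum degree.} If $G$ has a vertex $v$ of small degree $d$, apply induction to $G-v$ and repair the separation for the edges at $v$. Each edge $vu$ is added as an extension of a path of $G-v$ ending at $u$, or as a new short path. A constant number of new paths per deleted vertex is affordable, since each vertex contributes $10$ to the budget. This pins down which degrees count as ``low''. In the remaining case, every vertex has degree above some threshold $D$.

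\textbf{Step 2: the core construction.} Take a path decomposition of $G$, or of a spanning subgraph obtained by removing a sparse part. Using Lovász's theorem, $G$ decomposes into at most $\lceil n/2\rceil$ paths and cycles, and the cycles can be converted to paths at small cost. Next, split the edge set into a bounded number of ``colour classes'' $E_1,\dots,E_k$, each a linear forest, for instance via a Vizing-type edge colouring of an auxiliary structure, or via a random partition of each vertex's incident edges into two halves. For each class, chain its path components into a few long paths using connecting segments routed through other classes. Two edges are then strongly separated once they lie in different classes and each class has a path avoiding the other edge, or once they lie in the same class and are split by a ``prefix/suffix'' cut. The counting has to yield at most about $10n$ paths in total. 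The intended accounting is roughly $2n$ from the two halves of a decomposition, plus $8n$ for the linkages and for separating edges sharing a path, improving on the $19n$ of~\cite{BBDNS} mainly by reusing each linking path for two classes at once.

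\textbf{Main obstacle.} The delicate step is separating two edges $e,f$ that lie on the same decomposition path $P$, and doing so in both directions without spending many paths per $P$. Cutting $P$ into its two halves, and rerouting at the cut through an external vertex so that each piece remains a simple path, requires the high-degree assumption from Step 1. My first attempt would be a ``midpoint trick''. For each path $P$ of the decomposition, take the two subpaths $P_{\le i}$ and $P_{>i}$, chosen simultaneously and uniformly at random over all $P$. Glue these subpaths across different $P$'s into long paths using a perfect-matching-like linking in the dense graph. Repeat with $O(1)$ independent choices, and argue deterministically, by a pigeonhole over the few choices, that every ordered pair $e,f$ on a common $P$ is separated. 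Ensuring that the linkings stay vertex-disjoint from the pieces they join is where I expect most of the technical work, and it is where the constant $10$ is won or lost.
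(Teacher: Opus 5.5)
There is a genuine gap: the proposal is an outline, and its two load-bearing steps fail as stated. In Step~1 each deleted vertex can pay for only a constant number of new paths, so the threshold $D$ is a constant. The graph you are left with therefore merely has minimum degree above a constant. Such a graph can be sparse and full of cut vertices; take a long chain of copies of $K_{D+2}$ in which consecutive copies share a vertex. So the ``perfect-matching-like linking in the dense graph'' that Step~2 uses to glue pieces into long paths does not exist. Step~2 also relies on two things that do not hold:
\begin{itemize}
\item A partition of $E(G)$ into a bounded number of linear forests is impossible once $\Delta(G)$ is large, since $k$ linear forests have maximum degree at most $2k$.
\item Connecting segments ``routed through other classes'' may contain precisely the edge $f$ that the path is supposed to avoid, and your separation criterion never controls this.
\end{itemize}
Most importantly, the midpoint trick cannot overcome the obstacle you correctly identify. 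Let $e_j,e_{j+1}$ be consecutive edges of a decomposition path $P$ of length $\ell$. The only prefix/suffix split of $P$ that separates them is the cut at their common vertex. So $P$ needs all $\ell-1$ internal cut positions, and no pigeonhole over $O(1)$ choices can guarantee this when $\ell$ is large. Finally, the split $10n=2n+8n$ is asserted, not derived.

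The missing idea is a linking structure that exists in \emph{every} graph; the paper gets it from P\'osa rotations rather than density. Take a longest path $Q_i$ of the current graph and the set $S_i$ of rotation endpoints. Lemma~\ref{lem:posa} gives $N(S_i)\subseteq N_{Q_i}(S_i)$, so every edge at $S_i$ has both ends on $Q_i$, and these edges span at most $3|S_i|-1$ vertices. Then $Q_i$ itself serves as the highway for the chords, meaning the edges at $S_i$ that are not on $Q_i$:
\begin{itemize}
\item each sum class modulo an odd $m$ is at most two nested matchings and lies on one path (Lemma~\ref{lem:nested});
\item each distance class lies on three paths (Lemma~\ref{lem:distance});
\item sum and distance together determine the chord.
\end{itemize}
Hence at most $5r/2$ paths separate the chords of a component on $r$ vertices, plus $\lceil r/2\rceil$ covering paths from Theorem~\ref{thm:fan-path-cover}. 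The edges of the $Q_i$ incident with the $S_i$ form a $2$-degenerate graph, which at most $n$ paths separate by Theorem~\ref{thm:degenerate-separation}. Deleting $S_i$ and iterating gives $n+3\sum_i h_i\le n+9\sum_i s_i-3t\le 10n-3$. In this scheme, consecutive edges along a path never have to be separated by cutting. Edges of the $Q_i$ are handled by the degenerate-graph theorem, and chords are separated by their (sum, distance) labels, which is exactly what your midpoint trick cannot achieve.
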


To the best of our knowledge, Theorem~\ref{thm:main} is the first improvement
of the universal linear coefficient since the $19n$ theorem of Bonamy, Botler, Dross, Naia and Skokan~\cite{BBDNS}; the recent advances described above concern special graph
classes.  While the coefficient $10$ is certainly not expected to be optimal,
the result substantially narrows the general upper bound and introduces a
mechanism that combines path geometry with sparse auxiliary structure.
A more careful count in the same argument yields a further saving of order $\log n$, see Remark~\ref{rem:log-refinement}.

The subject has also developed in several related directions.  Weak separating
systems in complete graphs were studied by Wickes~\cite{Wickes2024}; Botler and Naia~\cite{BotlerNaia2025}
considered separation by cycles and by subdivisions of $K_4$;
Biniaz, Bose, De Carufel, Maheshwari, Miraftab, Odak, Smid, Smorodinsky and Yuditsky~\cite{BiniazEtAl2025} studied vertex-separating path and tree systems in several graph
classes; and Lichev and Sanhueza-Matamala~\cite{LichevSanhueza2026} recently determined
asymptotic results for vertex-separating path systems in random graphs.  Together with the recent work on strong edge separation,
these results illustrate a broader theme: imposing graph-theoretic structure
on the tests in a separating system creates extremal problems whose behavior
is governed simultaneously by information-theoretic separation and the
geometry of paths in the host graph.

The proof has two parts.  First, we place the vertices of a graph on a fixed
path and use edges of this path to join selected edges.  This gives a strongly
separating family of at most $5r/2$ paths for a graph on $r$ vertices placed on
the fixed path.  We then apply this result to the graphs produced by repeated
P\'osa rotations~\cite{Posa1976}.  The edges of the fixed paths form a
$2$-degenerate graph, to which we apply the theorem of Fernandes, Hoppen,
Kontogeorgiou, Mota, and Peng~\cite{FHKMP} giving a strongly separating family of at most
$n$ paths.

\section{Proof of the main result}\label{sec:proof}

\subsection{Paths along a fixed path}\label{sec:fixed-path}

Throughout this section, $Q$ is a fixed simple path.  Choose one endpoint
of $Q$ as its first endpoint.  This gives a linear order on $V(Q)$: we
write $u<v$ if $u$ occurs before $v$ when we follow $Q$ from its first
endpoint to its other endpoint.  When we move from $u$ to $v$ on $Q$, we
mean that we use the unique subpath of $Q$ between these two vertices.

The other edges considered in this section have both endpoints on $Q$,
but they are not edges of $Q$.  We use subpaths of $Q$ to join these
additional edges.  Thus every path constructed below lies in the union of
$Q$ and the stated additional edges.  The main point is to choose the
subpaths of $Q$ so that the resulting path does not repeat a vertex.

\begin{lemma}\label{lem:nested}
Suppose that
\begin{equation}\label{eq:nested-order}
 M=\{x_i y_i:1\leq i\leq t\},
 \qquad x_1<\cdots<x_t<y_t<\cdots<y_1.
\end{equation}
There is a simple path in $Q\cup M$ that contains every edge of $M$ and
has $x_1$ as an endpoint.  There is also such a path with $y_1$ as an
endpoint.
\end{lemma}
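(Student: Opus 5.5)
We build the path explicitly as a zigzag that alternates between the edges of $M$ and subpaths of $Q$, always stepping inward into the nested region so that no vertex is repeated. Write $Q[u,v]$ for the subpath of $Q$ between $u$ and $v$.

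For the endpoint $x_1$, start at $x_1$ and traverse the edge $x_1y_1$. From $y_1$, move along $Q$ inward to $y_2$ using $Q[y_2,y_1]$. Then traverse $y_2x_2$ and move along $Q$ from $x_2$ inward to $x_3$ using $Q[x_2,x_3]$. Continue alternately: after using the edge $x_iy_i$, we stand at $y_i$ if $i$ is odd and at $x_i$ if $i$ is even. We then walk along $Q$ to $y_{i+1}$ or $x_{i+1}$ respectively, and use the edge $x_{i+1}y_{i+1}$ in the appropriate direction. The path ends at whichever endpoint of $x_ty_t$ we reach last. If $t=1$ the path is the single edge $x_1y_1$.

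The main check is simplicity. The subpaths of $Q$ used are of two kinds: $Q[x_i,x_{i+1}]$ for even $i$, and $Q[y_{i+1},y_i]$ for odd $i$.
\begin{itemize}
\item The left-side subpaths lie in the intervals $[x_2,x_3],[x_4,x_5],\dots$, which are pairwise disjoint because $x_1<\cdots<x_t$ is strictly increasing and consecutive left intervals are separated (one ends at $x_{2k+1}$, the next starts at $x_{2k+2}$).
\item For the same reason, the right-side subpaths lie in the pairwise disjoint intervals $[y_2,y_1],[y_4,y_3],\dots$.
\item Every left interval lies in $[x_1,x_t]$ and every right interval lies in $[y_t,y_1]$. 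Since $x_t<y_t$, no left interval meets a right interval.
\end{itemize}
Each vertex $x_i$ or $y_i$ appears either as an endpoint of exactly one $Q$-subpath, where it is glued to its $M$-edge, or not on any subpath at all. In the latter case it is visited only through its $M$-edge; this happens for $x_1$, and for the final vertex. The edges of $M$ are not edges of $Q$, and each is used exactly once. Hence the walk is a simple path, it contains every edge of $M$, and it has $x_1$ as an endpoint.

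For the endpoint $y_1$, apply the same construction to the reversed order on $Q$. Reversing the order turns the nested condition \eqref{eq:nested-order} into $y_1<\cdots<y_t<x_t<\cdots<x_1$, so the roles of $x_i$ and $y_i$ swap and the construction produces a path starting at $y_1$. Equivalently, start at $y_1$, traverse $y_1x_1$, walk from $x_1$ to $x_2$, traverse $x_2y_2$, and so on.

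The only delicate point is the disjointness bookkeeping above. It rests on the strict inequalities in \eqref{eq:nested-order}, which ensure that shared endpoints occur only where a subpath is glued to an $M$-edge.
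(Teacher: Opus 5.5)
Your proposal is correct and follows the paper's proof: it uses the same zigzag that alternates between edges of $M$ and inward subpaths of $Q$, and obtains the second path by starting with $y_1x_1$ (equivalently, reversing the order on $Q$). Your check that the left intervals $[x_{2k},x_{2k+1}]$ and the right intervals $[y_{2k},y_{2k-1}]$ are pairwise disjoint and separated by $x_t<y_t$ simply spells out the paper's one-line claim that the intervals of $Q$ used are disjoint.
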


\begin{proof}
Start with $x_1y_1$.  Move backwards on $Q$ from $y_1$ to $y_2$, use
$y_2x_2$, move forwards from $x_2$ to $x_3$, and continue in this way.
The intervals of $Q$ used in this construction are disjoint.  For the
second path, start with $y_1x_1$, move forwards on $Q$ from $x_1$ to
$x_2$, use $x_2y_2$, move backwards from $y_2$ to $y_3$, and continue
in this way.
\end{proof}

We also need a version for two intervals of $Q$.  Let
$L=(\ell_0,\ldots,\ell_{d-1})$ and
$R=(r_0,\ldots,r_{d-1})$ be disjoint intervals, listed in their order on
$Q$, with every vertex of $L$ before every vertex of $R$.  The two
intervals need not be consecutive.

\begin{lemma}\label{lem:aligned}
Let $I\subseteq\{0,\ldots,d-1\}$ and
$M=\{\ell_i r_i:i\in I\}$.  If $|I|$ is odd, there is a simple path in
$Q\cup M$ that contains every edge of $M$ and goes from $\ell_0$ to
$r_{d-1}$.  There is another such path from $\ell_{d-1}$ to $r_0$.  If
$|I|$ is positive and even, there is one from $\ell_0$ to
$\ell_{d-1}$, and another from $r_0$ to $r_{d-1}$.
\end{lemma}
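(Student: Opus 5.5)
Write $I=\{i_1<\cdots<i_k\}$ and build the path as a zigzag between the two intervals. It traverses the matching edges $\ell_{i_1}r_{i_1},\ldots,\ell_{i_k}r_{i_k}$ in increasing order of index. Between consecutive matching edges it uses a forward move on $Q$, inside $L$ or inside $R$ alternately. The path is then completed by an initial and a final segment of $Q$.

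\emph{Odd case, from $\ell_0$ to $r_{d-1}$.} Start at $\ell_0$ and move forwards along $L$ to $\ell_{i_1}$. Cross to $r_{i_1}$, then move forwards along $R$ to $r_{i_2}$. Cross back to $\ell_{i_2}$, then move forwards along $L$ to $\ell_{i_3}$, and continue in this way. In $L$ we only visit $\ell_{i_1},\ell_{i_2}$, and then the subpaths $[\ell_{i_2},\ell_{i_3}]$, $[\ell_{i_4},\ell_{i_5}]$, and so on. In $R$ we visit $[r_{i_1},r_{i_2}]$, $[r_{i_3},r_{i_4}]$, and so on. Because all moves are forward and the indices increase, these subpaths are pairwise disjoint apart from shared single endpoints at which we switch side. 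One checks they are consistent: $\ell_{i_2}$ is the start of $[\ell_{i_2},\ell_{i_3}]$, and $\ell_{i_1}$ lies in the initial segment $[\ell_0,\ell_{i_1}]$.

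Since $k$ is odd, after the last crossing $\ell_{i_k}r_{i_k}$ we are in $R$ at $r_{i_k}$. We finish by moving forwards along $R$ to $r_{d-1}$. This final segment lies beyond all previously used vertices of $R$.

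\emph{Odd case, from $\ell_{d-1}$ to $r_0$.} This is the mirror construction. Start at $\ell_{d-1}$, move backwards along $L$ to $\ell_{i_k}$, and handle the edges in decreasing index order, always moving backwards, ending at $r_0$. Equivalently, reverse the orientation of both intervals; this is legitimate because the argument never used the relative order of $L$ and $R$, only each interval's internal order.

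\emph{Even case, from $\ell_0$ to $\ell_{d-1}$.} Use the same zigzag. With $k$ even and positive, the last crossing returns us to $\ell_{i_k}$, and we move forwards along $L$ to $\ell_{d-1}$.

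\emph{Even case, from $r_0$ to $r_{d-1}$.} Swap the roles of $L$ and $R$. Start at $r_0$, move to $r_{i_1}$, cross to $\ell_{i_1}$, and continue the zigzag, ending in $R$ at $r_{i_k}$ and then at $r_{d-1}$.

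The main point needing care is the disjointness bookkeeping. The used vertex sets in $L$ are the segments $[\ell_0,\ell_{i_1}]$, $[\ell_{i_2},\ell_{i_3}]$, \ldots, together with the final segment. These are ordered intervals separated by the gaps $(\ell_{i_1},\ell_{i_2})$, $(\ell_{i_3},\ell_{i_4})$, \ldots, which the path skips by crossing to $R$. In $R$ the complementary pattern holds. Consecutive used intervals on the same side are separated by at least one skipped crossing pair, so they never share a vertex. The one exception is $i_j=i_{j+1}$, which cannot occur since $I$ is a set.

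Degenerate cases are harmless: $i_1=0$, $i_k=d-1$, and $d=1$ each just give trivial (single-vertex) segments. All edges used are either edges of $Q$ lying inside $L$ or inside $R$, or edges of $M$. Hence the path lies in $Q\cup M$, is simple, and contains every edge of $M$.
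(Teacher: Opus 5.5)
Your proposal is correct and is essentially the paper's proof. It uses the same zigzag through $\ell_{i_1}r_{i_1},\ldots,\ell_{i_k}r_{i_k}$ with forward moves alternating between $L$ and $R$, lets the parity of $|I|$ decide where the final segment lies, swaps $L$ and $R$ for the path from $r_0$ to $r_{d-1}$, and uses decreasing index order for the path from $\ell_{d-1}$ to $r_0$; your disjointness bookkeeping simply spells out the paper's remark that the intervals used on $L$ and $R$ are pairwise disjoint.
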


\begin{proof}
Write $I=\{a_1<\cdots<a_t\}$.  Starting at $\ell_0$, move on $L$ to
$\ell_{a_1}$, cross to $r_{a_1}$, move on $R$ to $r_{a_2}$, cross to
$\ell_{a_2}$, and continue.  The intervals used on $L$ and $R$ are
pairwise disjoint.  If $t$ is odd, the last added edge ends in $R$, and
the path can be extended to $r_{d-1}$.  The construction is illustrated in Figure~\ref{fig:aligned-pairs}.

If $t$ is even, it ends in $L$,
and the path can be extended to $\ell_{d-1}$.  Starting in $R$ gives the
other two paths.  For the second path in the odd case, use the indices in
decreasing order.

\begin{figure}[t]
  \centering
\includegraphics[width=0.7\textwidth]
{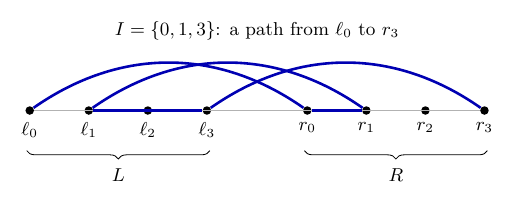}
  \caption{An illustration of Lemma~\ref{lem:aligned} for
  $I=\{0,1,3\}$. The solid path goes from $\ell_0$ to $r_3$
  and contains all pairs $\ell_i r_i$ with $i\in I$.}
  \label{fig:aligned-pairs}
\end{figure}
\end{proof}

We next regard $0,1,\ldots,q-1$ as both the vertices of $Q$ in their
linear order and the elements of $\mathbb Z_q$.  The cyclic distance
between $i$ and $j$ is $\min\{|i-j|,q-|i-j|\}$.

\begin{lemma}\label{lem:distance}
Let $q$ be odd and $1\leq d\leq(q-1)/2$.  Let $F$ be any set of pairs of
cyclic distance $d$.  There are three simple paths in $Q\cup F$ whose
union contains $F$.
\end{lemma}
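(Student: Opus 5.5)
Write $Q=0,1,\ldots,q-1$. A pair $\{i,j\}$ with $i<j$ has cyclic distance $d$ in exactly two ways. Either $j-i=d$ (a \emph{short} pair), or $j-i=q-d$ (a \emph{wrapping} pair). Since $d\le (q-1)/2$ we have $q-d\ge d+1$, so these two types are distinct. I would split $F$ into the wrapping pairs $F_0$ and two classes of short pairs $F_A,F_B$, and produce one simple path for each class. Empty classes simply give no path.

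The wrapping pairs are $\{j,\,j+q-d\}$ for $0\le j\le d-1$. Put $L=(0,\ldots,d-1)$ and $R=(q-d,\ldots,q-1)$. These intervals are disjoint because $q-d>d-1$, and $L$ precedes $R$. So $F_0$ is exactly an aligned family $\{\ell_jr_j:j\in I\}$ as in Lemma~\ref{lem:aligned}, and that lemma gives one simple path containing $F_0$ whatever the parity of $|I|$.

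For the short pairs, I would cut $Q$ into consecutive intervals $J_0,J_1,\ldots$ of length $d$, with $J_k=\{kd,\ldots,kd+d-1\}$; the last interval may be truncated. A short pair $\{i,i+d\}$ with $i\in J_k$ joins $J_k$ to $J_{k+1}$, with the same offset inside both intervals. Put the pair in $F_A$ if $k$ is even and in $F_B$ if $k$ is odd. Then $F_A$ is a disjoint union of aligned families on the interval pairs $(J_0,J_1),(J_2,J_3),\ldots$, and $F_B$ is the same on $(J_1,J_2),(J_3,J_4),\ldots$. Within each class, distinct interval pairs occupy disjoint segments of $Q$ that follow one another in order. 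If the last pair is truncated, I pad it to length $d$ with unused indices or shrink $d$ locally; Lemma~\ref{lem:aligned} does not need every index to be used.

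The plan for $F_A$ (and likewise $F_B$) is to sweep $Q$ from left to right. Block by block, I apply Lemma~\ref{lem:aligned} to get a subpath entering the block at $\ell_0$ and leaving at $r_{d-1}$, and I join consecutive blocks by the single $Q$-edge between them. Blocks with no pairs of the class are simply traversed along $Q$. Consecutive blocks share no vertices, so the concatenation is simple.

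The main obstacle is parity. When a block carries an even, nonzero number of pairs, Lemma~\ref{lem:aligned} only gives a path from $\ell_0$ to $\ell_{d-1}$. That path cannot continue rightwards, because it has already used vertices of $R$. I would first try to fix this by removing one pair from each such block, which makes the count odd so that the block can be crossed from $\ell_0$ to $r_{d-1}$. The removed pairs would then be collected into the path of the other class, or into the wrapping path. A cleaner alternative is to shift the block boundaries (for instance, to start a block at the last used index) so that every block has odd count. Either way, the point to verify is that the rerouted pairs still lie in segments of $Q$ that the receiving path uses disjointly; for the nested configurations that arise, Lemma~\ref{lem:nested} is the natural tool. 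If this rebalancing works, the three paths for $F_0$, $F_A$ and $F_B$ prove the lemma.
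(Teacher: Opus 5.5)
Your decomposition is the paper's own: wrapping pairs handled by Lemma~\ref{lem:aligned}, and short pairs grouped into blocks $\mathcal A_k$ and split by the parity of $k$. You also correctly find the obstacle. However, the proposal ends conditionally (``if this rebalancing works''), and the repair you describe does not work as stated.

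Your sweep crosses every nonempty block from $\ell_0$ to $r_{d-1}$, so you must remove a pair from \emph{every} even block, including the first and last nonempty ones. A pair removed from $\mathcal A_0$ is $\{a,d+a\}$ with $0\le a\le d-1$. A pair removed from the final block can have its upper endpoint in $\{q-d,\dots,q-1\}$. These vertices lie in the two intervals occupied by the wrapping path, so in general such pairs cannot simply be added to it.

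Moving a removed pair into the other parity class runs into the same kind of conflict. Both of its endpoints lie in intervals already traversed by that class's local paths, and Lemma~\ref{lem:aligned} gives no control over which vertices there are visited. The block-shifting alternative is not specified enough to check. Lemma~\ref{lem:nested} is also not the relevant tool, since the set-aside pairs $\{kd+a,(k+1)d+a\}$ are staggered along $Q$ rather than nested.

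Here is what is missing.
\begin{enumerate}
\item A class path need not sweep all of $Q$; only the \emph{internal} blocks have to be crossed. If the first nonempty block of a class has even size, use the path from $r_0$ to $r_{d-1}$ that Lemma~\ref{lem:aligned} also provides. If the last one has even size, use the path from $\ell_0$ to $\ell_{d-1}$. So pairs are set aside only from internal even blocks.
\item Each such block has a nonempty block of the same parity on both sides. This places every set-aside pair inside $\{d,\dots,q-d-1\}$.
\item The set-aside pairs from both classes, taken in increasing order of $k$, can be chained into one simple path. You move along $Q$ from the upper endpoint of one pair to the lower endpoint of the next. These connecting segments are pairwise disjoint; for consecutive indices the segment stays inside the shared interval of length $d$. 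The paper chooses distinct offsets $a$ for consecutive $k$ so that the pairs form a matching.
\item This chain is extended back along $Q$ to $d$. It is then joined across the $Q$-edge $\{d-1,d\}$ to a wrapping path chosen by Lemma~\ref{lem:aligned} to end at $d-1$. That wrapping path lives in $\{0,\dots,d-1\}\cup\{q-d,\dots,q-1\}$, disjoint from the chain.
\end{enumerate}
This is how the paper's third path absorbs both the wrapping pairs and the repair pairs. Without these steps your argument does not yet produce three paths.
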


\begin{proof}
Split the pairs of distance $d$ into
\begin{equation}\label{eq:two-parts}
 \mathcal A=\{\{i,i+d\}:0\leq i<q-d\},
 \qquad
 \mathcal B=\{\{i,i+q-d\}:0\leq i<d\}.
\end{equation}
For $k\geq0$, let $\mathcal A_k$ consist of the selected pairs
$\{i,i+d\}$ for which
$kd\leq i<\min\{(k+1)d,q-d\}$.  The pairs in $\mathcal A_k$ join two
intervals of length $d$, except that the last interval may be shorter.

For every $k$ such that $kd<q-d$, put
$s_k=\min\{d,q-d-kd\},$
and let
\[
 L_k=(kd,\ldots,kd+s_k-1),\qquad
 R_k=((k+1)d,\ldots,(k+1)d+s_k-1).
\]
Thus
\[
 \mathcal A_k
 =F\cap\bigl\{\{kd+a,(k+1)d+a\}:0\leq a<s_k\bigr\}.
\]
The intervals $L_k$ and $R_k$ have the same length, and $s_k=d$
except possibly for the last nonempty block.

We first treat the sets with even index.  Let $k_1<\cdots<k_m$
be the even indices for which $\mathcal A_{k_j}$ is nonempty.  If
$m=1$, Lemma~\ref{lem:aligned} gives a single path containing
$\mathcal A_{k_1}$, so suppose that $m\geq2$.  For every
$1<j<m$ such that $|\mathcal A_{k_j}|$ is even, temporarily set
aside one pair from $\mathcal A_{k_j}$.  The choices of the pairs set
aside will be specified below.  Every internal set now contains an odd
number of pairs.

For an internal index $k_j$, Lemma~\ref{lem:aligned} gives a path
through the remaining pairs of $\mathcal A_{k_j}$ from the first
vertex of $L_{k_j}$ to the last vertex of $R_{k_j}$.  Thus this local
path has one endpoint on the side facing the preceding block and the
other on the side facing the following block.

For the first set $\mathcal A_{k_1}$, if its size is odd, use the same
path from the first vertex of $L_{k_1}$ to the last vertex of
$R_{k_1}$.  If its size is even, use instead the path supplied by
Lemma~\ref{lem:aligned} from the first vertex of $R_{k_1}$ to the last
vertex of $R_{k_1}$.  In either case, the local path has an endpoint at
the last vertex of $R_{k_1}$, which is the endpoint needed to connect
it to the next block.

Similarly, for the last set $\mathcal A_{k_m}$, if its size is odd,
use the path from the first vertex of $L_{k_m}$ to the last vertex of
$R_{k_m}$.  If its size is even, use the path from the first vertex of
$L_{k_m}$ to the last vertex of $L_{k_m}$.  In either case, it has an
endpoint at the first vertex of $L_{k_m}$, which is the endpoint needed
to connect it to the preceding block.

For $1\leq j<m$, join the local path for $\mathcal A_{k_j}$ to the
local path for $\mathcal A_{k_{j+1}}$ along the subpath of $Q$ from
the last vertex of $R_{k_j}$ to the first vertex of $L_{k_{j+1}}$.
Since $k_{j+1}\geq k_j+2$, the regions $L_{k_j}\cup R_{k_j}$ are
pairwise disjoint and occur in increasing order on $Q$.  Hence these
joining subpaths are also pairwise disjoint and meet the local paths
only at their endpoints.  The remaining pairs in all even-indexed
sets therefore lie on one simple path. The solid arcs in Figure~\ref{fig:distance-classes} schematically
represent the local paths associated with the even-indexed sets
$\mathcal A_k$.  These local paths are joined along the intervening
subpaths of $Q$ to form the first path.

Now let $h_1<\cdots<h_t$
be the odd indices for which $\mathcal A_{h_j}$ is nonempty.  Apply
exactly the same construction to these sets: set aside one pair from
each internal set of even size, use a path crossing from $L_{h_j}$ to
$R_{h_j}$ for every internal set, and use the appropriate one-sided
path when the first or last set has even size.  Since
$h_{j+1}\geq h_j+2$, the resulting local paths can again be joined
along disjoint subpaths of $Q$.  Thus the remaining pairs in all
odd-indexed sets lie on a second simple path. The corresponding construction for the odd-indexed sets is represented
by the dashed arcs in Figure~\ref{fig:distance-classes}; it gives the
second path.

It remains to include the removed pairs and the selected pairs in
$\mathcal B$. The dotted arcs in Figure~\ref{fig:distance-classes} indicate the two
collections assigned to the third path: the pairs set aside from the
internal blocks and the selected pairs in $\mathcal B$.  The subpaths
of $Q$ used below to join these pairs into one simple path are not
shown in the figure. A pair in $\mathcal A_k$ has the form
$\{kd+a,(k+1)d+a\}$.  Choose the removed pairs in increasing order of
$k$, and choose different values of $a$ when two indices $k$ are
consecutive.  This is possible because every set from which a pair is
removed has at least two pairs.  The removed pairs then form a matching.

Order the removed pairs by their indices $k$.  Use one removed pair, move
on $Q$ to the lower endpoint of the next one, and continue.  If the two
indices are consecutive, the connecting interval lies in their common
interval of length $d$.  Otherwise it moves forwards through the
intervals between them.  The distinct values of $a$ in the first case and
the order of the pairs show that no vertex is repeated.  Hence all removed
pairs lie on one simple path.

A set $\mathcal A_k$ from which a pair was removed has a nonempty set of
the same parity both before and after it.  It follows that all endpoints
of the removed pairs lie in
\begin{equation}\label{eq:middle-interval}
 \{d,d+1,\ldots,q-d-1\}.
\end{equation}
The selected pairs in $\mathcal B$ join the two intervals
$\{0,\ldots,d-1\}$ and $\{q-d,\ldots,q-1\}$ in the same order.
Lemma~\ref{lem:aligned} gives a path through them with an endpoint at
$d-1$ or at $q-d$.  The path through the removed pairs can be extended
to $d$ or to $q-d-1$.  Joining these two paths across the corresponding
edge of $Q$ gives one simple path.  If one set is empty, only the other
path is used.  Together with the two paths obtained from the two parities,
this proves the lemma.
\end{proof}

\begin{figure}[t]
  \centering
  \includegraphics[width=0.8\textwidth]
    {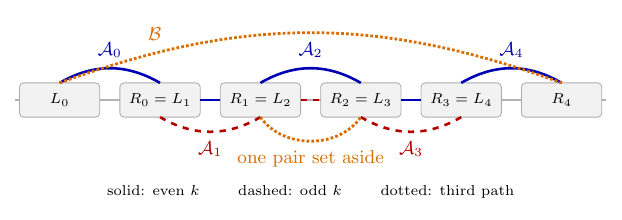}
  \caption{A schematic view of the decomposition used in the proof of
Lemma~\ref{lem:distance}.  Each solid or dashed arc represents a local
path supplied by Lemma~\ref{lem:aligned}, rather than a single edge.
The solid arcs correspond to the even-indexed sets $\mathcal A_k$, the
dashed arcs to the odd-indexed sets, and the dotted arcs to the pairs
assigned to the third path.  The subpaths of $Q$ used to concatenate
the local paths are not shown.}
\label{fig:distance-classes}
\end{figure}

\subsection{A separating family on a fixed path}\label{sec:one-graph}

Let $x_0,\ldots,x_{r-1}$ be vertices in this order on a fixed path $Q$.
The intervals of $Q$ between consecutive $x_i$ may contain other
vertices.  The proofs below use only the order of the $x_i$.

List the nonisolated vertices of $H$ as
$x_0,\ldots,x_{r-1}$ in their order on $Q$, and put
\[
 m=
 \begin{cases}
  r,& r\text{ odd},\\
  r+1,& r\text{ even}.
 \end{cases}
\]
Identify $x_i$ with $i\in\mathbb Z_m$; when $r$ is even, the label
$r$ is unused.  For an edge $\{i,j\}\in E(H)$, define
\[
 \sigma(i,j)=i+j\pmod m,\qquad
 \delta(i,j)=\min\{|i-j|,m-|i-j|\}.
\]
For $s\in\mathbb Z_m$, the edges satisfying $\sigma(i,j)=s$ form the
sum class $s$.  For $1\leq t\leq(m-1)/2$, the edges satisfying
$\delta(i,j)=t$ form the distance class $t$.

\begin{lemma}\label{lem:fixed-path-separator}
Let $H$ be a graph on $r\geq2$ nonisolated vertices of $Q$, and suppose
that $E(H)\cap E(Q)=\varnothing$.  There is a family of paths in
$Q\cup H$ that strongly separates the edges of $H$ and has size at most
\begin{equation}\label{eq:fixed-path-separator}
 \begin{cases}
  (5r-3)/2,& r\text{ odd},\\
  5r/2,& r\text{ even}.
 \end{cases}
\end{equation}
Moreover, the family may be chosen so that the edges of $H$ contained
in each path belong to a single sum class or to a single distance
class.
\end{lemma}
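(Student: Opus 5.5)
The approach is to reduce strong separation to a covering condition, and then cover each sum class by one path and each distance class by at most three paths.

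\textbf{Reduction to covering.} Since $m$ is odd, $2$ is invertible in $\mathbb Z_m$. Hence an edge $\{i,j\}$ is determined by its sum $\sigma(i,j)$ together with its distance $\delta(i,j)$: from $i+j\equiv s$ and $i-j\equiv\pm t$ we recover $\{i,j\}$. So it suffices to build a family in which
\begin{itemize}
\item each path contains only edges of $H$ from a single sum class or from a single distance class;
\item every edge $e$ of $H$ lies on some path of its own sum class and on some path of its own distance class.
\end{itemize}
Strong separation then follows. Given distinct edges $e,f$, if their sums differ, the sum-class path through $e$ avoids $f$. Otherwise their distances differ, and the distance-class path through $e$ avoids $f$. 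Throughout, the $x_i$ are placed on $Q$ and ``moving on $Q$'' between labels means moving along the corresponding subpath of $Q$. The edges of $H$ are not edges of $Q$, so Lemmas~\ref{lem:nested}, \ref{lem:aligned} and~\ref{lem:distance} apply with only the order of the labels mattering.

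\textbf{Sum classes: one path each.} Fix $s\in\{0,\dots,m-1\}$. The edges with $i+j\equiv s$ split into two sets:
\begin{itemize}
\item those with $i+j=s$ as integers, which lie in $\{0,\dots,s\}$;
\item those with $i+j=s+m$, which lie in $\{s+1,\dots,m-1\}$.
\end{itemize}
Each set has the nested form~\eqref{eq:nested-order}, and the two sets occupy disjoint intervals. By Lemma~\ref{lem:nested}, take a path through the first set ending at its outermost right endpoint $y_1$. Take a path through the second set starting at its outermost left endpoint $x_1'$. Join them along $Q$ from $y_1$ to $x_1'$. The first path stays in $[x_1,y_1]$ and the second in $[x_1',y_1']$, so the join repeats no vertex. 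This gives at most $m$ paths.

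\textbf{Distance classes: three paths each.} For each $1\le t\le (m-1)/2$, apply Lemma~\ref{lem:distance} with $q=m$ and $d=t$. When $r$ is even, the unused label $r=m-1$ is treated as a phantom last position carrying no pairs. One must check that the constructed paths never need to pass through it; if an endpoint extension would go there, it is simply truncated at $r-1$. This gives $3(m-1)/2$ paths, so the total is $(5m-3)/2$. For $r$ odd this is exactly $(5r-3)/2$.

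\textbf{Even case: saving one path.} For $r$ even the count above is $(5r+2)/2$, one more than $5r/2$. I expect this to be the main obstacle, and I would save the extra path in the distance class $d=r/2$ with $q=r+1$. There, the block $\mathcal A_1$ has $s_1=q-2d=1$, and its only possible pair is $\{d,2d\}=\{r/2,r\}$. This pair uses the unused label, so $\mathcal A_1=\varnothing$. Consequently:
\begin{itemize}
\item only the even block $\mathcal A_0$ is present, so one path covers $\mathcal A$ by Lemma~\ref{lem:aligned};
\item no pairs are set aside, so the third path of Lemma~\ref{lem:distance} only has to carry $\mathcal B$.
\end{itemize}
That is two paths instead of three, giving $(5r+2)/2-1=5r/2$. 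Every path still contains edges of $H$ from a single class, as the statement requires.
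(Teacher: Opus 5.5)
Your proposal is correct and follows the paper's proof essentially step by step. Both arguments use the $(\sigma,\delta)$ reduction, one path per sum class via Lemma~\ref{lem:nested}, and Lemma~\ref{lem:distance} for the distance classes, with your unused label $r$ playing the role of the paper's auxiliary vertex $y$. The check you leave open is immediate: that vertex is the last vertex of the path and meets no pair, so it has degree one in $Q\cup F$ and can only be an endpoint. Your saving in the class $d=r/2$ also coincides with the paper's. The surviving sets $\mathcal A_0=\{\{i,i+d\}:0\le i<d\}$ and $\mathcal B=\{\{i,i+d+1\}:0\le i<d-1\}$ are exactly the two alternating matchings that the paper covers with one path each via Lemma~\ref{lem:aligned}.
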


\begin{proof}
First suppose that $r$ is odd, so that $m=r$.  Fix
$s\in\mathbb Z_m$.  If $\sigma(i,j)=s$, then the ordinary integer sum
$i+j$ is either $s$ or $s+m$.  For a fixed integer $u$, the edges
$\{i,j\}$ satisfying $i+j=u$ form a nested matching: as their smaller
endpoints increase, their larger endpoints decrease.  Thus the edges
with integer sum $s$ form a nested matching in the first part of $Q$,
whereas those with integer sum $s+m$ form a nested matching in the last
part.

Apply Lemma~\ref{lem:nested} to these two matchings, choosing from each
matching a path whose endpoint faces the interval between them.  Join
the two paths along that interval of $Q$.  If one matching is empty,
use only the path for the other matching.  Hence every sum class is
contained in one simple path, and all sum classes together require at
most $m=r$ paths.

There are $(m-1)/2=(r-1)/2$ distance classes.  By
Lemma~\ref{lem:distance}, each distance class is contained in at most
three paths.  The total number of paths is therefore at most
\begin{equation}\label{eq:odd-count}
 r+3\frac{r-1}{2}=\frac{5r-3}{2}.
\end{equation}

The pair $(\sigma,\delta)$ determines the unordered pair $\{i,j\}$.
Indeed, the congruences
$i+j\equiv\sigma\pmod m$ and
$i-j\equiv\pm\delta\pmod m$ determine $i$ and $j$ up to exchange,
since $m$ is odd.  Now consider two distinct edges $e$ and $f$.  If
they belong to different sum classes, the sum-class path containing
$e$ avoids $f$, and the one containing $f$ avoids $e$.  If they belong
to the same sum class, they have different distances.  A distance-class
path containing $e$ then avoids $f$, and one containing $f$ avoids
$e$.  This proves the odd case.

Now let $r=2d$ be even, so that $m=r+1=2d+1$.  Temporarily extend the
subpath of $Q$ from $x_0$ to $x_{r-1}$ by an auxiliary vertex $y$
adjacent to $x_{r-1}$, and label $y$ by $2d$.  No edge of $H$ is
incident with $y$.  Apply the preceding sum-class construction modulo
$m$, obtaining at most $m$ sum-class paths.  For every distance
$t<d$, Lemma~\ref{lem:distance} gives at most three distance-class
paths.

Since $y$ is an endpoint of the auxiliary path and is incident with no
edge of $H$, every constructed path containing $y$ has $y$ as an
endpoint.  Deleting $y$ therefore leaves a simple path in $Q\cup H$
containing the same edges of $H$.  Hence the sum-class paths and the
paths for distances less than $d$ remain valid after $y$ is deleted.

It remains to treat distance $d$.  After $y$ is deleted, the graph
formed by all possible pairs of distance $d$ among the original
vertices is the path
\begin{equation}\label{eq:distance-d-path}
 d-1,2d-1,d-2,2d-2,\ldots,0,d.
\end{equation}
Its two alternating matchings are
\begin{equation}\label{eq:distance-d-matchings}
 \{\{i,i+d\}:0\leq i<d\},
 \qquad
 \{\{i,i+d+1\}:0\leq i<d-1\}.
\end{equation}
By Lemma~\ref{lem:aligned}, the selected edges of $H$ in each nonempty
matching are contained in one simple path.  Thus the distance-$d$ class
requires at most two paths.  Altogether, the number of paths is at most
\begin{equation}\label{eq:even-count}
 m+3(d-1)+2
 =(r+1)+3(d-1)+2
 =\frac{5r}{2}.
\end{equation}

Since $m=2d+1$ is odd, the pair $(\sigma,\delta)$ again determines the
unordered pair $\{i,j\}$.  The same argument as in the odd case proves
strong separation.  By construction, the edges of $H$ contained in
each path belong to a single sum class or to a single distance class.
Finally, the construction uses only the order of the labeled vertices,
so it remains valid when the intervals between them contain other
vertices of $Q$.
\end{proof}

\subsection{Completion of the proof}\label{sec:main-proof}

We use the following two known results.

\begin{theorem}[Fernandes, Hoppen, Kontogeorgiou, Mota, and Peng
\cite{FHKMP}]\label{thm:degenerate-separation}
Every $2$-degenerate graph $G$ admits a strongly separating family of
at most $|V(G)|$ paths.
\end{theorem}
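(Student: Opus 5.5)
The plan is to argue by induction on $|V(G)|$, using the defining property of $2$-degeneracy: $G$ has a vertex $v$ of degree at most $2$, and $G-v$ is again $2$-degenerate. The bare statement is too weak to induct on, since in the degree-$2$ case we can afford to add only one path but must separate two new edges from each other and from everything else. So I would first strengthen the hypothesis with an invariant about endpoints, in order to be allowed to \emph{extend} existing paths rather than only add new ones. A natural candidate is:
\begin{quote}
\emph{every non-isolated vertex $x$ is an endpoint of some path $P_x$ of the family, and for every edge $f\in P_x$ the family contains another path through $f$ that avoids the edge of $P_x$ at $x$.}
\end{quote}
Under this invariant, appending a new pendant edge at $x$ to $P_x$ cannot destroy any existing separation.

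The steps would be as follows.
\begin{enumerate}
\item If $\deg v=0$, take the family for $G-v$ and gain one unit of slack.
\item If $\deg v=1$ with neighbour $u$, add the single path $uv$. It contains $uv$ and avoids every other edge. Every other edge $f$ already lies on some path inside $G-v$, which avoids $uv$, so separation holds in both directions. The path $uv$ also serves as the endpoint path for $v$, and gives $u$ a new endpoint path, so the invariant is kept.
\item If $\deg v=2$ with neighbours $u,w$, add the one-edge path $uv$ and replace $P_w$ by $P_w+wv$. Then:
\begin{itemize}
\item $uv$ is separated from everything by its own path;
\item $wv$ lies on $P_w+wv$, which avoids $uv$;
\item $P_w+wv$ separates $wv$ from each edge outside $P_w$.
\end{itemize}
For edges $f\in P_w$, the second clause of the invariant supplies a path through $f$ avoiding the edge of $P_w$ at $w$. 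What remains is to separate $wv$ from $f$ in the direction ``$wv$ in, $f$ out''. I would try to handle this by choosing, among the paths ending at $w$, one that is as short as possible, or by also adding $v$ at the end of a second path ending at $w$.
\end{enumerate}
Each step adds at most one path per vertex, and the base case (a single vertex, or a single edge) is trivial, so this gives at most $|V(G)|$ paths.

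The main obstacle is the degree-$2$ step. We must simultaneously:
\begin{itemize}
\item separate $wv$ from every edge already on the extended path $P_w$;
\item re-establish the invariant at $u$, $w$ and $v$.
\end{itemize}
This must be done with a budget of exactly one new path. If extending one path at $w$ does not suffice, the first thing I would try is to extend paths at \emph{both} $u$ and $w$, turning $P_u$ into $P_u+uv$ and $P_w+wv$ into two paths that each contain exactly one of the new edges, and add no new path at all. That would leave a unit of slack, which could be spent on a fresh path repairing whichever separations the extensions break. Pinning down the exact invariant so that this bookkeeping closes is the delicate part, and I expect the final invariant to need some adjustment from the form stated above.
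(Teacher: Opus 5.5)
There is a genuine gap, and it is the step you flag yourself: the degree-$2$ case is not proved, and no clever choice of $P_w$ can rescue the construction you propose for it. Once you add the one-edge path $uv$ and replace $P_w$ by $P_w+wv$, the edge $wv$ lies on exactly one path of the family, namely $P_w+wv$. So for every old edge $f$ of $P_w$, no path contains $wv$ and avoids $f$. Extending a path never destroys an old separation, because the extended path contains the same old edges as before. What an invariant must actually guarantee is that each new edge lies on a \emph{second} path that avoids the old edges of the path it was appended to. Your stated invariant is also not consistent. Its second clause cannot hold for the terminal edge of $P_x$ itself. Your degree-$1$ step also makes the one-edge path $uv$ the end path of both $u$ and $v$, although $uv$ lies on no other path. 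If a later vertex $z$ of degree $2$ uses $v$ in the role of $w$, the path $uv$ is replaced by $uvz$, and $uv$, $vz$ become inseparable.

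Your last suggestion is close to what works. The fresh path should be $uvw$ itself, which contains both new edges and no old edge. You then extend two \emph{distinct} paths: one ending at $u$ by $uv$, and one ending at $w$ by $wv$. In the degree-$1$ case, add the path $uv$ and also extend one path ending at $u$ by $uv$. The invariant that makes the bookkeeping close is this: in each component with at least three vertices,
\begin{itemize}
\item every edge lies on exactly two paths;
\item distinct edges lie on distinct pairs of paths (for such families this is equivalent to strong separation);
\item every vertex is an endpoint of at least two paths.
\end{itemize}
Counting endpoints, the last condition forces at least, hence exactly, $|V|$ paths, which is the budget. It lets you choose $P_u\neq P_w$. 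It also regenerates: $u$ loses $P_u$ as an end path but gains $uvw$ (likewise $w$), while $v$ becomes an endpoint of both extended paths. The new pairs $\{uvw,P_u+uv\}$ and $\{uvw,P_w+wv\}$ differ from each other and from all old pairs, which are only relabelled. Merging two components through $v$ needs no extra care. Isolated vertices and single-edge components cannot satisfy the invariant. Carry them with their slack ($0$ paths, resp.\ one path for two vertices). When a new vertex meets them, replace them by a seed, such as $\{xy,yz,xyz\}$ on a path $xyz$ or the three $2$-edge paths of a triangle, followed by degree-$1$ steps. The paper does not reprove this theorem; it quotes it and remarks that its stronger form places every edge in exactly two paths, which is consistent with this invariant.
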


Moreover, we may choose this family so that every edge is covered. Indeed, for each connected component with at least three vertices, the stronger form proved in~\ref{thm:degenerate-separation} places every edge in exactly two paths; a component consisting of a single edge is covered by that edge itself.
 
\begin{theorem}[Fan \cite{Fan}]\label{thm:fan-path-cover}
Every connected graph on \(r\) vertices can have its edges covered by at most $\lceil r/2\rceil$ paths.
\end{theorem}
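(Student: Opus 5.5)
We would prove Theorem~\ref{thm:fan-path-cover} by reducing the covering problem to a decomposition problem for an auxiliary multigraph in which almost every vertex has odd degree, and then invoke Lov\'asz's decomposition theorem. Let $G$ be connected on $r$ vertices and fix a spanning tree $T$ of $G$. A covering of $E(G)$ by paths is the same as a decomposition of some multigraph $G'$ into paths, where $G'$ is obtained from $G$ by adding a parallel copy of some edges. Any path of $G'$ that uses two parallel copies would repeat a vertex. Hence every path in a decomposition of $G'$ is a path of $G$, and the family covers $E(G)$.

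\textbf{Step 1: choose the doubled edges.} Let $U$ be the set of even-degree vertices of $G$. Doubling an edge flips the degree parity at both of its endpoints. Because $T$ is connected, for every vertex set $S$ of even size there is a set $J\subseteq E(T)$ whose odd-degree vertices in $T$ are exactly those of $S$ (a $T$-join).
\begin{itemize}
\item If $|U|$ is even, take $S=U$.
\item If $|U|$ is odd, take $S=U\setminus\{v\}$ for one vertex $v\in U$.
\end{itemize}
By the handshake lemma, $|U|\equiv r\pmod 2$. Doubling the edges of $J$ gives a multigraph $G'$ in which every vertex has odd degree when $r$ is even. When $r$ is odd, exactly one vertex has even degree.

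\textbf{Step 2: apply Lov\'asz's theorem.} Lov\'asz proved that a graph with $u$ odd-degree vertices and $g$ even-degree vertices decomposes into $u/2+\max(g-1,0)$ paths and cycles. When $g\le 1$, all of these pieces can be taken to be paths. Indeed, the standard induction removes a path between two odd vertices. Once only a single even vertex may remain, any cycle left over can be merged with a path by rerouting through a shared vertex. This is the case $g\le1$ of Lov\'asz's argument.
\begin{itemize}
\item For $r$ even we get $r/2$ paths.
\item For $r$ odd we get $(r-1)/2\le\lceil r/2\rceil$ paths.
\end{itemize}
In both cases the bound is at most $\lceil r/2\rceil$, as required.

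\textbf{Main obstacle.} The critical point is that Lov\'asz's theorem is usually stated for simple graphs, whereas $G'$ has parallel edges, so 2-cycles may appear as pieces of the decomposition. We would first check that Lov\'asz's inductive proof carries over verbatim to multigraphs whose edge multiplicities are at most $2$. Such a multigraph has at most $|V|/2$ odd vertices' worth of paths, and a path can never use both copies of an edge.

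If a 2-cycle $\{e,e'\}$ does survive as a piece, we would repair it by hand. Delete $e'$ and lengthen an existing path that ends at an endpoint of $e$ by the edge $e$. Every vertex of $G'$ with $g\le1$ is the endpoint of some path, so such a path always exists. If this extension would repeat a vertex, we would instead switch edges, following Fan's edge-switching technique, to absorb the 2-cycle.

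If this direct route fails, the fallback is to cite Fan's theorem as proved in~\cite{Fan}, since the main argument only uses its statement.
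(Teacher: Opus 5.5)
\textbf{There is a genuine gap in Step 2.} Lov\'asz's theorem does not extend to your multigraph $G'$, even though its doubled edges form a $T$-join inside a spanning tree.

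Your argument would show that every connected graph with $r$ odd is covered by $(r-1)/2$ paths, and that is false. $K_r$ with $r$ odd has $r(r-1)/2$ edges and a path has at most $r-1$ edges, so at least $(r+1)/2$ paths are needed. Concretely, for $K_3$ you double one edge and get four edges on three vertices with degrees $3,3,2$. Lov\'asz's count $u/2+g-1$ equals $1$, but no single path or cycle on three vertices contains four edges.

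The even case fails too. Take the triangle $abc$ with a pendant edge $cd$, and let $T$ be the star at $c$. Then $U=\{a,b\}$, $J=\{ac,bc\}$, and all four vertices of $G'$ have odd degree. A decomposition of its six edges into $2$ paths would have to consist of two Hamiltonian paths. Each would end at $d$ and so each would use the single copy of $cd$, which is impossible.

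So the count itself breaks down for multigraphs, not merely through the appearance of $2$-cycles. A local repair that lengthens one path by one copy of a doubled edge cannot correct a wrong count. Even for simple graphs, the reason all pieces are paths when $g\le 1$ is a counting fact: each odd vertex ends some path, so at least $u/2$ paths are needed. Merging a cycle into a path through a shared vertex produces a trail, not a path.

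What is missing is the substance of Fan's theorem itself. One has to control globally which edges are reused, and that is what Fan's subgraph-covering and edge-switching argument does. Deferring the repair to ``Fan's edge-switching technique'' therefore amounts to citing Fan's proof.

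The paper gives no proof of this statement. It cites Fan's paper and uses the bound $\lceil r/2\rceil$ as a black box when counting paths for the components $C$ of $J_i$. Your final fallback, citing Fan, is the only sound part of the proposal and is exactly what the paper does. The reduction to Lov\'asz's theorem should not be presented as a proof.
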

We next describe the form of P\'osa rotation that we use.  Let
$Q=v_0v_1\cdots v_\ell$ be a path, and fix its endpoint $v_\ell$.  If
$v_0v_i$ is an edge for some $2\leq i\leq\ell$, replace the edge
$v_{i-1}v_i$ by $v_0v_i$.  This gives the new path
$v_{i-1}v_{i-2}\cdots v_0v_iv_{i+1}\cdots v_\ell$.  It has the same
vertex set as $Q$ and still has endpoint $v_\ell$.  Repeating this
operation gives further paths with the same vertex set and the same fixed
endpoint.

For a graph $R$ and $X\subseteq V(R)$, let $N_R(X)$ be the set of
vertices outside $X$ that have a neighbor in $X$.  Let $S$ be the set of
all endpoints different from $v_\ell$ that can be obtained from $Q$ by a
sequence of the rotations above.

We include the standard proof of the following lemma for the convenience
of the reader.

\begin{lemma}\label{lem:posa}
If $Q$ is a longest path of $R$, then
\begin{equation}\label{eq:posa}
 N_R(S)\subseteq N_Q(S)
 \qquad\text{and}\qquad
 |N_R(S)|\leq2|S|-1.
\end{equation}
\end{lemma}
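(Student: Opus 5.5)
The argument is the classical one of P\'osa, and it has two parts. The first part shows $N_R(S)\subseteq N_Q(S)$. The second is a counting bound on $N_Q(S)$, using that $S$ is a set of vertices on the path $Q$ and that $v_\ell\notin S$.

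For the first inclusion, take $x\in S$ and a neighbour $y$ of $x$ in $R$ with $y\notin S$. We must show that $y$ is a $Q$-neighbour of some vertex of $S$. Fix a rotated path $Q'$ with endpoints $x$ and $v_\ell$. Since $Q$ is longest, so is $Q'$, so $y\in V(Q')=V(Q)$; otherwise $Q'$ could be extended by $xy$. If $y=v_\ell$, then by maximality and the rotation rule, $y$ is either the $Q'$-neighbour of $x$ or admits a rotation. I would handle it like any other $y$; note that $v_\ell$ is never removed as an endpoint, so the argument below still applies.

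Write $y^-$ for the neighbour of $y$ on $Q'$ lying on the side towards $x$. If $y$ is the $Q'$-successor of $x$, then $y$ is a $Q'$-neighbour of $x$. Otherwise, rotating along $xy$ makes $y^-$ an endpoint, so $y^-\in S$. In either case $y$ is a $Q'$-neighbour of a vertex of $S$. It remains to pass from $Q'$-adjacency to $Q$-adjacency.

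This transfer is the main obstacle. I would prove it by induction on the number of rotations, with the following claim: if $u\notin S$ and both $Q$-neighbours of $u$ lie outside $S$, then $u$ has the same two neighbours in every rotated path. The point is that a rotation replacing $v_{i-1}v_i$ by $v_0v_i$ breaks only an edge at $v_{i-1}$, and $v_{i-1}$ becomes a new endpoint and hence lies in $S$. So an edge of $Q$ whose two ends are both outside $S$ is never broken, and it persists in every rotated path.

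Now suppose $y\notin S$ and $y$ is $Q'$-adjacent to some $z\in S$. If $y$ had no $Q$-neighbour in $S$, then both $Q$-edges at $y$ would have both ends outside $S$. By the claim, these two edges persist in $Q'$, so $y$ would have the same neighbours in $Q'$ as in $Q$, none of which lies in $S$; this contradicts the adjacency to $z$. The edge case where $y$ is an endpoint of $Q$ (namely $v_\ell$, which has a single $Q$-neighbour) is handled in the same way. This gives $N_R(S)\subseteq N_Q(S)$.

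For the count, each vertex of $S$ has at most two $Q$-neighbours, so $|N_Q(S)|\le 2|S|$. To save one, note that $v_0\in S$ (it is an endpoint with zero rotations, and $v_0\neq v_\ell$ when $\ell\ge1$), and $v_0$ has only one $Q$-neighbour. Hence $|N_Q(S)|\le 2|S|-1$, and this proves \eqref{eq:posa}.
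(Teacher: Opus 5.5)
Your proof is correct and is essentially the paper's classical P\'osa argument: the extra rotation along $xy$, the observation that every rotation removes an edge one of whose ends becomes a new endpoint and hence lies in $S$, and the count using that $v_0\in S$ has degree one in $Q$. The only difference is in the transfer step: you show that $Q$-edges with both ends outside $S$ survive every rotation, whereas the paper looks at the first rotation that removes an edge at $y$, and your version avoids having to justify that this first removed edge is an original edge of $Q$.
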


\begin{proof}
Every path obtained by rotations has vertex set $V(Q)$.  Since $Q$ is a
longest path, none of its possible endpoints has a neighbor outside
$V(Q)$.

Let $x\in S$ and let $y\in N_R(x)\setminus S$.  Choose a path from $x$
to $v_\ell$ obtained from $Q$ by rotations.  If $xy$ is not an edge of
this path, perform one more rotation using $xy$.  If $xy$ is an edge of
the original path $Q$, then $y\in N_Q(S)$ immediately.  Otherwise, the
sequence of rotations contains a step at which an edge incident with $y$
is removed: this happens either when $xy$ was first added or in the last
rotation just described.  Consider the first such step.  The removed edge
belongs to the original path $Q$.  Whenever a rotation removes an edge,
one of its endpoints becomes a possible endpoint of a path with fixed
endpoint $v_\ell$, and hence belongs to $S$.  Since $y\notin S$, the
other endpoint of the removed edge belongs to $S$.  Thus
$y\in N_Q(S)$, and $N_R(S)\subseteq N_Q(S)$ follows.

The vertex $v_0$ belongs to $S$ and has degree one in $Q$.  Every other
vertex of $S$ has degree at most two in $Q$.  Therefore
$|N_Q(S)|\leq2|S|-1$, which proves the second assertion.
\end{proof}

\begin{proof}[Proof of Theorem~\ref{thm:main}]
Set $G_0=G$. Suppose that $G_{i-1}$ has an edge. Choose a longest path
$Q_i$ in $G_{i-1}$, fix one of its endpoints, and let $S_i$ be the set
given by Lemma~\ref{lem:posa}. Define $G_i=G_{i-1}-S_i$. Let $H_i$
be the graph formed by the edges of $G_{i-1}$ that have at least one
endpoint in $S_i$. Thus the edge sets $E(H_i)$ and $E(G_i)$ are
disjoint and together form $E(G_{i-1})$. Continue until the remaining
graph has no edges.

Put $s_i=|S_i|$, $h_i=|V(H_i)|$ and
$V(H_i)=S_i\cup N_{G_{i-1}}(S_i)$. Lemma~\ref{lem:posa} gives
\begin{equation}\label{eq:layer-count}
h_i=s_i+|N_{G_{i-1}}(S_i)|\leq3s_i-1.
\end{equation}
The sets $S_i$ are disjoint, while the sets $E(H_i)$ partition $E(G)$.
In particular,
\begin{equation}\label{eq:deleted-vertices}
\sum_i s_i\leq n.
\end{equation}

Let $B_i=E(Q_i)\cap E(H_i)$, and let $J_i$ be the graph with edge set
$E(H_i)\setminus B_i$. Finally, let $B$ be the graph with edge set
$\bigcup_i B_i$. For each $i$, every edge of $H_i$ lies in exactly one
of $B_i$ and $J_i$: it lies in $B_i$ if it is an edge of $Q_i$, and it
lies in $J_i$ otherwise. Since the edge sets $E(H_i)$ are disjoint and
together contain all edges of $G$, every edge of $G$ lies either in $B$
or in exactly one graph $J_i$, but not in both.

\medskip
{\noindent We {\bf Claim} that $B$ is $2$-degenerate.}

\begin{proof}[Proof of the Claim]

We define an ordering of the vertices for this purpose. This ordering
concerns only $B$, not the graphs $J_i$. Order the vertices by their
deletion steps. Thus every vertex of $S_i$ comes before every vertex
deleted at a later step. The order inside each set $S_i$ is arbitrary.
Put the vertices that remain after the process at the end, again in any
order.

Let $v\in S_i$, and let $vw$ be an edge of $B$ such that $w$ comes after
$v$. The edge $vw$ belongs to some $B_j$. If $j<i$, then one endpoint
of $vw$ belongs to $S_j$. This endpoint cannot be $v$, so $w\in S_j$
and $w$ comes before $v$, a contradiction. If $j>i$, then
$v\notin V(G_{j-1})$, so $vw\notin E(Q_j)$, again a contradiction.
Therefore $vw\in B_i$. Since $B_i\subseteq E(Q_i)$, the vertex $v$ has
at most two later neighbors in $B$.

Every vertex left at the end is isolated in the final remaining graph.
Thus all its neighbors in $B$ were deleted earlier and come before it.
Hence it has no later neighbor in $B$. Therefore $B$ is $2$-degenerate.
\end{proof}

By Theorem~\ref{thm:degenerate-separation}, $B$ admits a strongly
separating family $\mathcal B$ such that
\begin{equation}\label{eq:B-family}
|\mathcal B|\leq |V(B)|\leq n.
\end{equation}
As noted after Theorem~\ref{thm:degenerate-separation}, we may choose
$\mathcal B$ so that it also covers every edge of $B$.

Consider a component \(C\) of \(J_i\) that contains an edge, and let
\(r=|V(C)|\). Since \(C\) is nontrivial, every vertex of \(C\) is
incident with an edge of \(J_i\), and hence with an edge of \(H_i\).
Lemma~\ref{lem:posa} therefore implies that
$V(C)\subseteq V(Q_i)$.
Moreover, by the definition of \(J_i\), all edges of \(H_i\) that lie
on \(Q_i\) were removed, and hence
$$
 E(C)\cap E(Q_i)=\varnothing.
$$

Thus the vertices of \(C\) may be ordered according to their order
along \(Q_i\), and Lemma~\ref{lem:fixed-path-separator} applies. Let
\(\mathcal S_C\) be the resulting strongly separating family for the
edges of \(C\).

We also need a family that covers every edge of \(C\). This is useful
later when separating an edge of \(C\) from an edge belonging to a
different component: a path contained in \(C\) and containing the
former edge automatically avoids every edge of the other component.
Accordingly, by Theorem~\ref{thm:fan-path-cover}, there is a family
\(\mathcal C_C\) of at most \(\lceil r/2\rceil\) paths in \(C\) such
that every edge of \(C\) lies on some member of \(\mathcal C_C\).

The two families together have size at most
\begin{equation}\label{eq:component-count}
\begin{cases}
(5r-3)/2+(r+1)/2=3r-1,& r\text{ odd},\\
5r/2+r/2=3r,& r\text{ even}.
\end{cases}
\end{equation}
In particular,
$$
 |\mathcal S_C|+|\mathcal C_C|\le 3r.
$$

Let \(C_1,\ldots,C_m\) be the nontrivial components of \(J_i\), and put
\(r_k=|V(C_k)|\). By \eqref{eq:component-count}, the number of paths
used for \(C_k\) is at most \(3r_k\). Since the components are
vertex-disjoint and \(V(C_k)\subseteq V(H_i)\) for every \(k\), we have
$$
 \sum_{k=1}^m r_k\le |V(H_i)|=h_i.
$$

Hence the total number of paths used for all nontrivial components of
\(J_i\) is at most
$$
 3\sum_{k=1}^m r_k\le 3h_i.
$$

Let $\mathcal P$ be the union of $\mathcal B$ and all families
$\mathcal S_C$ and $\mathcal C_C$. We verify that $\mathcal P$ strongly
separates $E(G)$. Two edges of $B$ are separated by $\mathcal B$. Two
edges in the same component $C$ are separated by $\mathcal S_C$. If the
edges lie in different components, a path in $\mathcal C_C$ containing
the first edge avoids the second, and the corresponding path for the
other component gives the reverse separation. Finally, suppose that one
edge lies in $B$ and the other lies in a component $C$. A path in
$\mathcal B$ containing the first edge avoids $E(C)$, while a path in
$\mathcal C_C$ containing the second edge avoids $E(B)$. These cases
cover every pair of distinct edges.

Let $t$ be the number of steps. Since $E(G)\ne\varnothing$, we have
$t\geq1$. By~\eqref{eq:layer-count},
\eqref{eq:deleted-vertices}, and~\eqref{eq:B-family},
\begin{equation}\label{eq:final-count}
|\mathcal P|\leq n+3\sum_i h_i
\leq n+9\sum_i s_i-3t
\leq10n-3.
\end{equation}
This proves the theorem.
\end{proof}

\begin{remark}\label{rem:log-refinement}
A more careful count gives the sharper bound
\begin{equation}\label{eq:log-refinement}
\ssp(G)\leq\lfloor 10n-3\log_2 n-9\rfloor.
\end{equation}
Suppose that the process stops after $t\geq1$ deletion steps. Thus
$G_{t-1}$ has an edge, and deleting $S_t$ leaves the edgeless graph
$G_t$. Let $r=|V(G_t)|$. Thus $V(G_t)$ is precisely the set of
vertices left when the whole process ends.

For $0\leq i\leq t$, let $r_i=|V(G_i)|$. Hence $r_{i-1}$ is the
number of vertices before step $i$, while $r_i$ is the number after
that step. In particular, $r_0=n$, $r_i=r_{i-1}-s_i$, and $r_t=r$.
The fixed endpoint of $Q_i$ does not lie in $S_i$, so it survives
step $i$. Therefore $r_i\geq1$ for $1\leq i\leq t$, and in
particular $r\geq1$.

For $1\leq i\leq t$, the proof gives $h_i\leq3s_i-1$. We also have
$h_i\leq r_{i-1}$, since every vertex counted by $h_i$ belongs to
$G_{i-1}$. Set $\varepsilon_i=3s_i-h_i$. Thus
$h_i=3s_i-\varepsilon_i$ and $\varepsilon_i\geq1$. The quantity
$\varepsilon_i$ measures the saving at step $i$ over the bound
$h_i\leq3s_i$. We claim that
\begin{equation}\label{eq:one-step-saving}
\varepsilon_i\geq\log_2\frac{r_{i-1}}{r_i}.
\end{equation}

If $s_i\leq r_{i-1}/2$, then
$r_i=r_{i-1}-s_i\geq r_{i-1}/2$. Thus the logarithm in
\eqref{eq:one-step-saving} is at most $1$, and the claim follows from
$\varepsilon_i\geq1$.

Suppose that $s_i>r_{i-1}/2$. The bound $h_i\leq r_{i-1}$ gives
$\varepsilon_i\geq3s_i-r_{i-1}\geq s_i$. Since $r_i\geq1$, we also
have
$$
 \frac{r_{i-1}}{r_i}
 =1+\frac{s_i}{r_i}
 \leq1+s_i
 \leq2^{s_i}.
$$

Here $s_i$ is a positive integer because $S_i$ contains an endpoint
of $Q_i$. Hence the logarithm in \eqref{eq:one-step-saving} is at most
$s_i$. This proves the claim in the second case.

Summing \eqref{eq:one-step-saving} over \(1\leq i\leq t\), the
intermediate logarithmic terms cancel, leaving only the first and last
terms. Since $r_0=n$ and $r_t=r$, we obtain
$$
 \sum_{i=1}^t\varepsilon_i\geq\log_2\frac nr.
$$

We also have
$$
 \sum_{i=1}^t s_i=n-r,
$$

because exactly the vertices in $S_1,\ldots,S_t$ are deleted.

The family constructed in the proof satisfies
$$
 |\mathcal P|\leq n+3\sum_{i=1}^t h_i.
$$

Using the preceding relations in this count gives
\begin{equation}
\begin{aligned}
|\mathcal P|
&\leq n+3\sum_{i=1}^t(3s_i-\varepsilon_i)\
&=10n-9r-3\sum_{i=1}^t\varepsilon_i\
&\leq10n-9r-3\log_2\frac nr.
\end{aligned}
\end{equation}
The last expression equals
$10n-3\log_2 n-(9r-3\log_2 r)$. Since $r$ is a positive integer,
$\log_2 r\leq r-1$, and hence
$9r-3\log_2 r\geq6r+3\geq9$. Therefore
$|\mathcal P|\leq10n-3\log_2 n-9$. Since $|\mathcal P|$ is an
integer, this proves \eqref{eq:log-refinement}.
\end{remark}

\bibliographystyle{abbrv}
\bibliography{separation}

\end{document}